\newtheorem{theorem}{Theorem}[section]
\newtheorem{corollary}[theorem]{Corollary}
\newtheorem{proposition}[theorem]{Proposition}
\theoremstyle{definition}
\newtheorem{definition}[theorem]{Definition}
\numberwithin{equation}{subsection}
\newtheorem*{ack}{Acknowledgement}
\newcommand{\Aut}{\operatorname{Aut}}
\newcommand{\Conj}{\operatorname{Conj}}
\newcommand{\Inn}{\operatorname{Inn}}
\newcommand{\Der}{\operatorname{Der}}
\newcommand{\A}{\operatorname{A}}
\newcommand{\Hom}{\operatorname{Hom}}
\newcommand{\id}{\operatorname{id}}
\newcommand{\As}{\operatorname{As}}
\newcommand{\X}{\operatorname{X}}
\begin{document}
\title{Derivations of quandles}
\author{Neha Nanda}
\author{Mahender Singh}
\author{Manpreet Singh}

\address{Department of Mathematical Sciences, Indian Institute of Science Education and Research (IISER) Mohali, Sector 81,  S. A. S. Nagar, P. O. Manauli, Punjab 140306, India.}
\email{nehananda@iisermohali.ac.in}
\email{mahender@iisermohali.ac.in}
\email{manpreetsingh@iisermohali.ac.in}

\subjclass[2010]{Primary 57M25; Secondary 20B25, 20N02}
\keywords{abelian quandle, automorphism, derivation, derived quandle, quandle action, virtual quandle}

\maketitle

\begin{abstract}
The aim of this paper is to propose a theory of derivations for quandles. Given a quandle $A$ admitting an action by a quandle $Q$, derivations from $Q$ to $A$ are introduced as twisted analogues of quandle homomorphisms. It is shown that for each quandle $Q$ there exists a unique $Q$-quandle $\mathcal{A}_Q$ (the derived quandle of $Q$) such that derivations from $Q$ to any $Q$-quandle $A$ are in bijective correspondence with $Q$-quandle homomorphisms from $\mathcal{A}_Q$ to $A$. Further, it is proved that the set of all derivations to an abelian $Q$-quandle $A$ has the structure of an abelian quandle, and inherits many other properties from $A$. In the end, the ideas are extended to the setting of virtual quandles.
\end{abstract}

\section{Introduction}\label{sec0}
A quandle is an algebraic system with a binary operation that satisfies certain axioms encoding the three Reidemeister moves of planar diagrams of links in the 3-space. Although special types of quandles had already appeared in the literature in different guises, the subject captured real attention after fundamental works of Joyce \cite{Joyce} and Matveev \cite{Matveev}. They showed that link quandles of non-split links are complete invariants up to orientation of the ambient space. This result has led to exploration and development of various aspects of the theory of quandles. The reader is encouraged to refer to \cite{Carter, Kamada, Nelson} for more on the historical development of the subject.
\par

The isomorphism problem for quandles is as hard as the classification problem for links. Thus, it is natural to develop newer invariants of quandles themselves. This makes study of algebraic aspects of quandles, which are not necessarily knot quandles, important. For example, a (co)homology theory for quandles and racks has been developed in \cite{Carter2}, which has led to stronger invariants of links. In fact, a recent work \cite{Szymik2019} shows that quandle cohomology is a Quillen cohomology which is the cohomology group of a functor from the category of models to that of complexes. Automorphisms of quandles, which reveal a lot about their internal structures have been investigated in much detail in a series of papers \cite{BDS, BarTimSin, Elhamdadi2012, Hou}.
\par

The aim of this paper is to propose a general theory of derivations for quandles. At this point, our motivation is purely algebraic and inspired by \cite{Crowell}, but the ideas might have applications to knot theory. The key point is the notion of action of a quandle on another quandle by automorphisms. The quandle action is then used to define derivations of quandles, which can be seen as twisted analogues of quandle homomorphisms, where the twisting is by quandle actions. For each quandle homomorphism we construct a universal quandle called the derived quandle. This derived quandle is then used to show finiteness of the set of derivations from a finitely generated quandle $Q$ to a finite quandle admitting a $Q$-action.
\par

The paper is organized as follows. Section \ref{prelim} collects some basic definitions from the theory of quandles. In Section \ref{UniversalPropertyOfDerivedQuandles}, using the notion of quandle actions, we introduce derivations of quandles (Definition \ref{derivation-definition}). We define the derived quandle of a quandle homomorphism (Definition \ref{derived-quandle-definition}) and prove its existence and uniqueness (Theorem \ref{derived-quandle-existence}). The universal property of the derived quandle is then used to prove that the total number of derivations from a finitely generated quandle $Q$ to a finite $Q$-quandle is always finite (Corollary \ref{number-derivations-finite}). In Section \ref{PropertiesOfDerivations}, we explore properties of derivations, and prove that the set of all derivations to an abelian quandle is an abelian quandle (Theorem \ref{der-set-abelian-quandle}), which is a generalisation of \cite[Theorem 4.1]{Crans-Nelson}. We also establish a general isomorphism theorem for the abelian quandle of derivations (Theorem \ref{MorphismBetweenSetOfDerivationsOverDifferentQuandles}). The paper conclude with Section \ref{section-virtual}, where the ideas are extended to the setting of virtual quandles (Theorem \ref{virtual-derived-existence-uniqueness}).
\medskip

\section{Preliminaries}\label{prelim}
A {\it quandle} is a non-empty set $Q$ with a binary operation $(q_1,q_2) \mapsto q_1 * q_2$ satisfying the following axioms:
\begin{enumerate}
\item[(Q1)] $q*q=q$ for all $q \in Q$;
\item[(Q2)] For any $q_1, q_2 \in Q$ there exists a unique $q \in Q$ such that $q_2=q*q_1$;
\item[(Q3)] $(q_1*q_2)*q_3=(q_1*q_3) * (q_2*q_3)$ for all $q_1,q_2,q_3 \in Q$.
\end{enumerate}
\par
Besides knot quandles associated to knots, many interesting examples of quandles come from groups. For example, if $G$ is a group, then the set $G$ equipped with the binary operation $$a*b= b a b^{-1}$$ gives a quandle structure on $G$, called the {\it conjugation quandle}, and denoted by $\Conj(G)$.
\par
A map $f : Q \to Q'$ is {\it quandle homomorphism} if $f(q_1 \ast q_2)= f(q_1) \ast f(q_2)$ for all $q_1 , q_2 \in Q$. Since $G \mapsto \Conj(G)$ is a functor from the category of groups to the category of quandles, any group homomorphism $f : G \to H$ induces a quandle homomorphism $f : \Conj(G) \to \Conj(H)$. Notice that, the quandle axioms are equivalent to saying that for each $q \in Q$, the map $S_q: Q \to Q$ given by $$S_q(p)=p*q$$ is an automorphism of the quandle $Q$ fixing $q$. Such an automorphism is called an {\it inner automorphism} of $Q$.
\par
For a given quandle $Q$, the group $\As(Q)$ generated by the set $\{ e_q ~|~ q \in Q \}$ modulo the relations $$e_{q_1 \ast q_2}=e_{q_2} e_{q_1} e_{q_2}^{-1}$$ is called the \textit{associated group} of the quandle $Q$. Note that there is a natural quandle homomorphism $$e : Q \to \Conj(\As(Q))$$ sending $q$ to $e_q$. Moreover, associated group $\As(Q)$ satisfies the property that given any quandle homomorphism $\varphi : Q \to \Conj(G)$, where $G$ is any group, there exists a unique group homomorphism $\bar{\varphi} : \As(Q) \to G$ such that the following diagram commutes 
\begin{equation}
\medskip
\label{UniversalPropertyOfAssociatedGroupOfQuandles}
\begin{tikzcd}
                                           &  & \Conj(\As(Q)) \arrow[dd, "\bar{\varphi}", dashed] \\
Q \arrow[rru, "e"] \arrow[rrd, "\varphi"'] &  &                                             \\
                                           &  & \Conj(G).                                  
\end{tikzcd}
\end{equation}

We shall write $\bar{\varphi}$ as $\varphi$ for convenience.
\medskip

\section{Derived Quandles}\label{UniversalPropertyOfDerivedQuandles}
Let $Q$ and $A$ be two quandles. We say that $A$ is a $Q$-quandle if there is an action of the associated group $\As(Q)$ on $A$ by quandle automorphisms. Equivalently, by the universal property of associated groups, $A$ is a $Q$-quandle if there is a quandle homomorphism $$\varphi : Q \to \Conj(\Aut(A)).$$
 A subquandle $B$ of $A$ is called a $Q$-subquandle if it is invariant under the action of $\As(Q)$. A quandle homomorphism $f: A_1 \rightarrow A_2$, where $A_1$ and $A_2$ are two $Q$-quandles, is called a $Q$-homomorphism if $$f(g \cdot a ) = g \cdot f(a)$$ for all $a \in A_1$ and $g \in \As(Q)$. The set of all such maps is denoted by $\Hom_Q(A_1,A_2)$.
\par

Since $Q \mapsto \As(Q)$ is a functor from the category of quandles to the category of groups, a quandle homomorphism $\phi: Q \rightarrow Q'$ induces a group homomorphism $\phi: \As(Q) \rightarrow \As(Q')$ given by $$\phi(e_q)=e_{\phi(q)}.$$ It is clear that every $Q'$-quandle $X$ is also a $Q$-quandle via $\phi$. Some examples of quandle actions are:
\begin{itemize}
\item There is a natural action of $\As(Q)$ on $Q$ by inner automorphisms. More precisely, the map $\varphi: \As(Q) \to \Aut(Q)$ given by $e_q \mapsto S_q$ is a group homomorphism.

\item If $X$ is any set, then any map $\varphi : \As(Q) \to \Sigma_X$, where $\Sigma_X$ is the symmetric group on the set $X$, defined by $e_q \mapsto \alpha$ for a fixed $\alpha \in \Sigma_X$, is a group homomorphism.
\end{itemize}
\medskip

According to \cite[Definition 2.2]{Rubinsztein}, an action of a topological quandle $Q$ on a topological space $X$ is a continuous map $Q \times X \to X$ given by $$(q, x) \mapsto q \cdot x,$$ and satisfying
\begin{equation}\label{rubinsztein-action}
(q_1 \ast q_2) \cdot (q_2 \cdot x) =  q_2 \cdot (q_1 \cdot x).
\end{equation}
It is easy to see that our definition of quandle action satisfies \eqref{rubinsztein-action}. However, the converse is not true. Let $Q$ be any quandle and $X= \{ 1, 2, 3\}$ with quandle matrix
$$
\setcounter{MaxMatrixCols}{20}
\begin{bmatrix}
     1&  3&  1 \\
   2&  2&  2 \\
   3&  1&  3 \\
\end{bmatrix}.
$$
Let $\sigma=(1,2,3)$ be a permutation of $X$. Notice that $\sigma$ is not a quandle homomorphism of $X$. The map $Q \times X \to X$ given by $(q ,x) \mapsto \sigma(x)$ for $x \in X, q \in Q$ clearly satisfies \eqref{rubinsztein-action}. But, for each $q \in Q$, the induced map $X \to X$ given by $x \mapsto \sigma(x)$, is not a quandle homomorphism.

\begin{definition}\label{derivation-definition}
Let $A$ be a $Q'$-quandle and $\phi: Q \rightarrow Q'$ a quandle homomorphism. A map $\partial: Q \rightarrow A$ is called a \textit{derivation} if 
$$
\partial(q_1 \ast q_2)= \partial(q_1) \ast \big(\phi(e_{q_1}) \cdot \partial(q_2) \big)
$$
for all $q_1, q_2 \in Q$.
\end{definition} 

Following are some examples of derivations:
\begin{itemize}
\item If the action of $Q'$ on $A$ is trivial, then every quandle homomorphism from $Q$ to $A$ is a derivation.
\item If $A$ is a trivial quandle, then for any action of $Q'$ on $A$,  every quandle homomorphism is a derivation.
\item Let $T_x \in \Aut(A)$ be such that it fix the point $x \in A$. Consider the action $Q \to \Conj\big(\Aut(A)\big)$ given by $q \mapsto T_x$ for all $q \in Q$. Then the constant map $Q \to A$ given as $q \mapsto x$, $q \in Q$, is clearly a derivation.
\end{itemize}

\begin{definition}\label{derived-quandle-definition}
 A \textit{derived quandle} of a quandle homomorphism $\phi: Q \rightarrow Q'$ is defined as a $Q'$-quandle $\mathcal{A}_\phi$ together with a derivation $\partial: Q \rightarrow \mathcal{A}_\phi$ such that for any $Q'$-quandle $A$ and a derivation $\partial': Q \rightarrow A$, there exists a unique $Q'$-homomorphism $\lambda : \mathcal{A}_\phi \to A$ such that $\lambda \circ \partial = \partial'$, that is, the following diagram commutes
\begin{equation}\label{DiagramOfUniversalPropertyOfDerivedQuandles}
\begin{tikzcd}
Q \arrow[rr, "\partial"] \arrow[rrdd, "\partial'"'] &  & \mathcal{A}_\phi \arrow[dd, "\lambda", dashed] \\
                                                    &  &                                      \\
                                                    &  & A.                                  
\end{tikzcd}
\end{equation}
\end{definition}
For our convenience, we denote the derived quandle of the quandle homomorphism $\phi$ by $(\mathcal{A}_\phi, \partial)$, a model for which we now construct explicitly.

\begin{theorem}\label{derived-quandle-existence}
The derived quandle of a quandle homomorphism exists and is unique.
\end{theorem}

\begin{proof}
Take $X = \As(Q') \times Q$ as a set and define $\mathcal{A}_\phi$ to be the quandle with presentation $$ \langle X ~|~ R~  \rangle,$$
where $R$ is the set of following relations $$(g, q_1 \ast q_2) = ( g, q_1 ) \ast \big(g \phi(e_{q_1}), q_2 \big) ,$$
for all $q_1, q_2 \in Q$ and $g \in \As(Q')$. We note that $\mathcal{A}_\phi$ is a $Q'$-quandle since there is a natural action of $\As(Q')$ on $\mathcal{A}_\phi$ which is defined on generators as $$g' \cdot (g, q) = (g' g,q),$$ where $(g, q) \in X$ and $g' \in \As(Q')$. We next define the map $\partial: Q \rightarrow \mathcal{A}_\phi$ by $$\partial (q)=(1,q).$$
For $q_1, q_2 \in Q$, we have
\begin{align*}
\partial (q_1 \ast q_2) &= (1, q_1 \ast q_2)\\
&=(1, q_1) \ast (\phi(e_{q_1}), q_2)\\
&=(1,q_1) \ast \big(\phi(e_{q_1}) \cdot(1,q_2) \big)\\
&= \partial(q_1) \ast \big(\phi(e_{q_1}) \cdot \partial(q_2) \big),
\end{align*}
and hence $\partial$ is a derivation.
\par 
Let us now consider any $Q'$-quandle $A$ and a derivation $\partial' : Q \to A$. We define a map $\lambda : \mathcal{A}_\phi \to A$ on generators by setting
$$\lambda \big((g,q) \big)= g \cdot \partial'(q).$$
The map $\lambda$ is well-defined since
\begin{align*}
\lambda \big((g, q_1 \ast q_2) \big) &= g \cdot \partial'(q_1 \ast q_2) \\
&= g \cdot \big(\partial'( q_1) \ast \big(\phi(e_{q_1}) \cdot \partial'(q_2) \big) \big)\\
&= g \cdot \partial'(q_1) \ast \big( (g \phi(e_{q_1})) \cdot \partial'(q_2) \big)\\
&=\lambda\big((g,q_1)\big) \ast \lambda \big( \big( g\phi(e_{q_1}) \big),q_2 \big).
\end{align*}
By definition, it follows that $\lambda$ is a $Q'$-homomorphism such that $\lambda \circ \partial = \partial'$.

The map $\lambda$ is unique. For, if there exists another $Q'$-homomorphism $\lambda' : \A_\phi \to A$ such that $\lambda' \circ \partial = \partial'$, then we have
 $$ \lambda' \big(\partial (q) \big) = \partial'(q) $$
for all $q \in Q$. That is,
$$ \lambda' \big((1,q) \big)= \partial'(q). $$
Since $\lambda'$ is $Q'$-homomorphism, we have
\begin{align*}
\lambda' \big((g,q) \big) &=  g \cdot \lambda' \big((1,q) \big)\\
&= g \cdot \partial'(q)\\
&= \lambda \big((g,q) \big).
\end{align*}
Thus, the derived quandle $(\mathcal{A}_\phi, \partial)$ of the quandle homomorphism $\phi : Q \to Q'$ always exists, and is clearly unique upto isomorphism.
\end{proof}

If $Q=Q'$ and $\phi=\id$, we denote $\mathcal{A}_\phi$ by $\mathcal{A}_Q$.

\begin{definition}
Let $A$ be a $Q$-quandle and $S \subset A$.  We say that $A$ is \textit{$Q$-generated} by $S$ if $A$ is generated by the set $\{ g \cdot s ~|~ g \in \As(Q), ~ s \in S\}$ as a quandle. If such a finite set exists, then we say that $A$ is a \textit{finitely generated $Q$-quandle}.
\end{definition}

For example, let $Q$ be a quandle viewed as a $Q$-quandle under the natural action of $\Inn(Q)$. If $S$ is a set of representatives of orbits under this action, then $Q$ is $Q$-generated by the set $S$. Further, if $Q$ has finitely many orbits, then it is a finitely generated $Q$-quandle.

\begin{theorem}\label{MainTheoremOfDerivedQuandles}
Let $\phi : Q \to Q'$ be a quandle homomorphism. If $Q$ is generated by $S$, then $\mathcal{A}_\phi$ is $Q'$-generated by $\partial(S)$.
\end{theorem}
 
\begin{proof}
Let $A$ be a $Q'$-subquandle of $\mathcal{A}_\phi$ generated by the set $\partial(S)$. Since $S$ generates the quandle $Q$ and $A$ is invariant under the action of $\As(Q')$, it follows that $\partial(q) \in A$ for all $q \in Q$. Therefore, we can define a map $\partial' : Q \to A$ by $$\partial'(q) = \partial(q),$$ for all $q \in Q$, which is a derivation. By the universal property of derived quandles, there exists unique $Q'$-homomorphism $\lambda : \mathcal{A}_\phi \to A$ such that $\lambda \circ \partial = \partial'$. If $\iota : A \to \mathcal{A}_\phi$ is the natural inclusion, then we have the following diagram
$$
\begin{tikzcd}
                                                                                   &  & \mathcal{A}_\phi \arrow[d, "\lambda"] \\
Q \arrow[rru, "\partial"] \arrow[rrd, "\partial"', dashed] \arrow[rr, "\partial'"] &  & A \arrow[d, "\iota", hook]  \\
                                                                                   &  & \mathcal{A}_\phi.              
\end{tikzcd}
$$
Thus, for every $q \in Q$, have $\iota \circ \lambda \circ \partial(q) = \partial(q)$. By uniqueness, $\iota \circ \lambda = \id$, which implies that $\iota$ is surjective, and hence $A = \mathcal{A}_\phi$.
\end{proof}
 
 As a consequence, we have the following result.
 
 \begin{corollary}\label{MainCorollaryOfDerivedQuandles}
Let $\phi : Q \to Q'$ be a quandle homomorphism. If $Q$ is a finitely generated quandle, then $\mathcal{A}_\phi$ is a finitely generated $Q'$-quandle.
 \end{corollary}
 
Using the preceding corollary and the universal property of derived quandles, we obtain the following result.

\begin{corollary}\label{number-derivations-finite}
Let $Q$ be a finitely generated quandle, $\phi : Q \to Q'$ a quandle homomorphism and $A$ a $Q'$-quandle which is finite. Then the number of derivations from $Q$ to $A$ is finite.
\end{corollary}
\medskip

\section{Properties of derivations}\label{PropertiesOfDerivations}
Let $A$ be a $Q'$-quandle and $\phi: Q \rightarrow Q'$ a quandle homomorphism. We denote the set of all derivations from $Q$ to $A$ by $\Der_\phi(Q, A)$. In this section, we investigate properties of $\Der_\phi(Q, A)$ that it inherits from given quandles, particularly $A$. 

Recall that a quandle $A$ is said to be \textit{abelian} if
$$ (x \ast y) \ast ( z \ast w) = (x \ast z) \ast ( y \ast w)$$
for all  $ x, y, z,w \in A$.
Abelian quandles are also called {\it medial quandles} in the literature. See \cite{JPSZ} for a recent study of these quandles.

\begin{theorem}\label{der-set-abelian-quandle}
Let $\phi : Q \to Q'$ be a quandle homomorphism and $A$ an abelian $Q'$-quandle. If the set $\Der_\phi(Q, A)$ is non-empty, then it has the structure of an abelian quandle with respect to the binary operation given by 
\begin{equation}\label{point-wise1}
(f \ast g)(q) = f(q) \ast g(q),
\end{equation}
where $f, g \in \Der_\phi(Q, A)$ and $q \in Q$.
\end{theorem}

\begin{proof}
We note that the set of all maps from $Q$ to $A$ forms an abelian quandle under the operation defined in \eqref{point-wise1}. It now suffices to show that $\Der_\phi(Q,A)$ is closed under $\ast$ and $\ast^{-1}$. Let $f, g \in \Der_\phi(Q,A)$ and $q_1, q_2 \in Q$. Then we have 
\begin{align*}
(f \ast g)(q_1 \ast q_2) &= f(q_1 \ast q_2) \ast g(q_1 \ast q_2)\\
&= \big( f(q_1) \ast \big( \phi(e_{q_1}) \cdot f(q_2)\big) \big) \ast \big( g(q_1) \ast \big( \phi(e_{q_1}) \cdot g(q_2) \big) \big)\\
&= \big( f(q_1) \ast g(q_1) \big) \ast \big( \phi(e_{q_1}) \cdot f(q_2) \ast \phi(e_{q_1}) \cdot g(q_2)\big), ~\text{since }A \text{ is abelian}\\
&= (f \ast g)(q_1) \ast \big(\phi(e_{q_1}) \cdot (f \ast g)(q_2)\big).
\end{align*}
Similarly, we can show that
$$(f \ast^{-1} g)(q_1 \ast q_2) = (f \ast^{-1}  g)(q_1) \ast \big(\phi(e_{q_1}) \cdot (f \ast^{-1}  g)(q_2)\big),$$
which is desired.
\end{proof}

If the action of $Q$ on $A$ (via $\phi$) is trivial, then $\Der_\phi(Q,A) = \Hom(Q, A)$ and we recover the following result of \cite[Theorem 4.1]{Crans-Nelson}.

\begin{theorem}
Let $Q$ be a quandle and $A$ an abelian quandle. Then the set $\Hom(Q, A)$ has the structure of an abelian quandle with respect to the binary operation as given in \eqref{point-wise1}.
\end{theorem}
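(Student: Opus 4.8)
The plan is to obtain this statement as the special case of Theorem \ref{der-set-abelian-quandle} corresponding to the trivial action, since the text has already observed that homomorphisms are precisely the derivations with respect to the trivial action. First I would take $\phi : Q \to \Conj(\Aut(A))$ to be the trivial quandle action, so that $\phi(q) = \id_A$ for every $q \in Q$; this is a genuine action because any constant map between quandles is a quandle homomorphism. With this choice, the derivation condition \eqref{derivation-condition} reads
$$f(q_1 \ast q_2) = f(q_1) \ast f(q_2)^{\phi(q_1)} = f(q_1) \ast f(q_2),$$
which is exactly the defining condition for $f$ to lie in $\Hom(Q, A)$. Hence $\Der_\phi(Q, A) = \Hom(Q, A)$ as sets, and the pointwise operation \eqref{point-wise} is literally the same on both.

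Next I would check the non-emptiness hypothesis of Theorem \ref{der-set-abelian-quandle}. This is immediate: axiom (Q1) guarantees that every constant map $Q \to A$ is a quandle homomorphism, so $\Hom(Q, A) \neq \emptyset$. Applying Theorem \ref{der-set-abelian-quandle} to the trivial action $\phi$ then gives that $\Der_\phi(Q, A) = \Hom(Q, A)$ carries the structure of an abelian quandle under the operation \eqref{point-wise}, which is the assertion.

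I do not expect any genuine obstacle, as the result is a direct specialization. If instead one were to argue from scratch, the only substantive point would be the verification of axiom (Q2): given $f, g \in \Hom(Q, A)$, one defines $h(q)$ to be the unique element of $A$ with $h(q) \ast f(q) = g(q)$, and must show that $h$ is again a homomorphism. This is exactly the step in the proof of Theorem \ref{der-set-abelian-quandle} where the mediality (abelianness) of $A$ is used in an essential way; by contrast, axioms (Q1) and (Q3) together with the medial identity for the pointwise operation all descend coordinatewise from $A$ and present no difficulty.
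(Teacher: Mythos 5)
Your proposal is correct and matches the paper's own route exactly: the paper obtains this statement by specializing Theorem \ref{der-set-abelian-quandle} to the trivial action, under which derivations reduce to homomorphisms. Your additional observation that constant maps guarantee $\Hom(Q,A)\neq\emptyset$ (so the non-emptiness hypothesis of Theorem \ref{der-set-abelian-quandle} is satisfied) is a small but worthwhile detail the paper leaves implicit.
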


\begin{proposition}
Let $\phi : Q \to Q'$ be a quandle homomorphism and $A$ an abelian $Q'$-quandle. Then the set $\Hom_{Q'}(\mathcal{A}_\phi, A)$ forms an abelian quandle under the binary operation defined by $$(\lambda_1 \ast \lambda_2 ) \big((g,q) \big) = \lambda_1 \big((g,q) \big) \ast \lambda_2 \big((g,q) \big),$$
where $(g,q) \in \mathcal{A}_\phi$ and $\lambda_1, \lambda_2 \in \Hom_{Q'}(\mathcal{A}_\phi, A)$.
\end{proposition}

\begin{proof}
It suffices to prove that the set $\Hom_{Q'}(\mathcal{A}_\phi, A)$ is closed under $\ast$ and $\ast^{-1}$. 
For $\lambda_1, \lambda_2 \in \Hom_{Q'}(\mathcal{A}_\phi, A)$, $q_1, q_2 \in Q$ and $g \in \As(Q')$, we have 
\begin{align*}
\big(\lambda_1 \ast \lambda_2\big) \big( (g, q_1 \ast q_2) \big)&= \lambda_1 \big( (g, q_1 \ast q_2) \big) \ast \lambda_2 \big( (g, q_1 \ast q_2) \big)\\
&= \lambda_1\big((g, q_1) \ast (g\phi(e_{q_1}), q_2)\big) \ast  \lambda_2\big((g, q_1) \ast (g\phi(e_{q_1}), q_2)\big)\\
&= \big(\lambda_1 \big( (g, q_1) \big) \ast \lambda_1 \big( (g\phi(e_{q_1}), q_2) \big) \big) \ast   \big( \lambda_2 \big( (g, q_1) \big) \ast \lambda_2 \big( (g\phi(e_{q_1}), q_2)\big) \big)\\
&= \big( \lambda_1 \big( (g, q_1) \big) \ast  \lambda_2 \big((g, q_1)\big) \big) \ast \big( \lambda_1 \big( (g\phi(e_{q_1}), q_2) \big) \ast \lambda_2 \big( (g\phi(e_{q_1}), q_2)\big) \big),  ~A \text{ is abelian }\\ 
&=(\lambda_1 \ast \lambda_2) \big((g,q_1) \big) \ast (\lambda_1 \ast \lambda_2) \big( (g\phi(e_{q_1}), q_2) \big).
\end{align*}
Similarly, we can show that
$$\big(\lambda_1 \ast^{-1} \lambda_2\big) \big( (g, q_1 \ast q_2) \big) = (\lambda_1 \ast^{-1} \lambda_2) \big((g,q_1) \big) \ast (\lambda_1 \ast^{-1} \lambda_2) \big( (g\phi(e_{q_1}), q_2) \big).$$
\end{proof}

By the universal property of derived quandles, there is a bijection between the sets $\Der_\phi(Q,A)$ and $\Hom_{Q'}(\mathcal{A}_\phi, A)$. Below we show that this bijection is, in fact, an  isomorphism of quandles provided $A$ is abelian. 

\begin{proposition}\label{IsomorphismOfMorphismsDerivations}
Let $\phi : Q \to Q'$ be a quandle homomorphism and $A$ an abelian $Q'$-quandle. Then $$\Hom_{Q'}(\mathcal{A}_\phi, A)\cong \Der_\phi(Q, A).$$
\end{proposition}

\begin{proof}
Let $(A_\phi, \partial)$ be the derived quandle of the quandle homomorphism $\phi$ and $$\tilde \partial : \Hom_{Q'}(\mathcal{A}_\phi, A) \to \Der_\phi(Q,A)$$ be defined as
$$\tilde \partial (\lambda) =  \lambda \circ \partial,$$
where $\lambda \in \mathcal{Q}$. By existence and uniqueness of $(A_\phi, \partial)$, it follows that $\tilde{\partial}$ is a bijection. Further, for $\lambda_1, \lambda_2 \in \Hom_{Q'}(\mathcal{A}_\phi, A)$ and $q \in Q$, we have 
\begin{align*}
\big(\tilde \partial (\lambda_1 \ast \lambda_2) \big)(q) &= (\lambda_1 \ast \lambda_2) \big((1, q) \big)\\
&= \lambda_1 \big( (1,q) \big) \ast \lambda_2 \big((1,q) \big)\\
&= \tilde \partial (\lambda_1) \ast \tilde \partial (\lambda_2)(q),
\end{align*}
which is desired.
\end{proof}

Recall that a quandle $Q$ is said to be \textit{commutative} if 
$ p \ast q = q \ast p $ for all $p, q \in Q$. A quandle is called \textit{involutary} if $S_q^2 = \id_Q$ for all $q \in Q$. For example, all Takasaki quandles are involutary. Motivated by the work of Loos \cite{Loos1, Loos2} on Riemannian symmetric spaces, Ishihara and Tamaru \cite{Ishihara} defined a quandle $X$ to be \textit{flat} if the group $$\big\langle S_p \circ S_q \mid p,q \in Q\big\rangle$$ is abelian. It was shown in \cite{Singh} that Takasaki quandles of 2-divisible groups are flat. If $A$ is an abelian quandle, then we have the following result whose proof is immediate.

\begin{proposition}\label{flat-inheritance}
The following statements hold for quandles $\Der_\phi(Q,A)$ and $\Hom_{Q'}(\mathcal{A}_\phi, A)$:
\begin{enumerate}
\item If $A$ is commutative, then $\Der_\phi(Q, A)$ and $\Hom_{Q'}(\mathcal{A}_\phi, A)$ are commutative.
\item If $A$ is involutary, then $\Der_\phi(Q, A)$ and $\Hom_{Q'}(\mathcal{A}_\phi, A)$ are involutary.
\item If $A$ is flat, then $\Der_\phi(Q, A)$ and $\Hom_{Q'}(\mathcal{A}_\phi, A)$ are flat.
\end{enumerate}
\end{proposition}

\begin{theorem}\label{MorphismBetweenSetOfDerivationsOverDifferentQuandles}
Let $\phi_1 : Q_1 \to X_1$, $\phi_2:Q_2 \to X_2$, $\sigma: Q_2 \to Q_1$ and $\psi: X_1 \to X_2$ be quandle homomorphisms such that $\psi \circ \phi_1 \circ \sigma = \phi_2$. Let $A_i$ be an abelian $\X _i$-quandle for $i=1,2$. If $\tau: A_1 \to A_2$ is a $X_1$-homomorphism, then there exist quandle homomorphisms  $$\Phi: \Der_{\phi_1}(Q_1, A_1) \to \Der_{\phi_2}(Q_2, A_2),$$
$$\tilde \Phi: \Hom_{X_1}(\mathcal{A}_{\phi_1}, A_1) \to \Hom_{X_2}(\mathcal{A}_{\phi_2}, A_2).$$
Further, if $\psi$, $\sigma$ and $\tau$ are isomorphisms, then so are $\Phi$ and $\tilde \Phi$.
\end{theorem}

\begin{proof}

For each $f \in \Der_{\phi_1}(Q_1, A_1)$, define 
$$\Phi: \Der_{\phi_1}(Q_1, A_1) \to \Der_{\phi_2}(Q_2, A_2),$$
by setting
$$\Phi(f)= \tau \circ f \circ \sigma.$$
We first check that the map $\Phi$ is well-defined. For any $q_1, q_2 \in Q_2$, we have
\begin{align*}
\Phi(f)(q_1 \ast q_2) &= \tau \circ f \circ \sigma (q_1 \ast q_2)\\
&= \tau \circ f \big(\sigma (q_1) \ast \sigma(q_2) \big)\\
&= \tau \Big( f \big(\sigma(q_1) \big) \ast \phi_1(e_{\sigma(q_1)}) \cdot f \big(\sigma(q_2) \big) \Big)\\
&= \tau  f  \sigma (q_1) \ast \Big(\psi \big(\phi_1(e_{\sigma(q_1)}) \big) \cdot \tau f \sigma(q_2) \Big)\\
&= \Phi(f)(q_1) \ast \Big(\phi_2(e_{q_1}) \cdot \Phi(f)(q_2) \Big),  ~ \text{since } \psi \circ \phi_1 \circ \sigma = \phi_2.
\end{align*}
It follows easily that $\Phi$ is a quandle homomorphism. Further, if $\sigma$, $\psi$ and $\tau$ are isomorphisms, then we can define 
$$\Psi: \Der_{\phi_2}(Q_2, A_2) \to \Der_{\phi_1}(Q_1, A_1)$$
by setting
$$\Psi(f)= \tau^{-1} \circ f \circ \sigma^{-1}$$
for $f \in \Der_{\phi_2}(Q_2, A_2)$. An easy check shows that $\Phi$ and $\Psi$ are inverses of each other.\\
Now we define 
$$\tilde \Phi: \Hom_{X_1}(\mathcal{A}_{\phi_1}, A_1) \to \Hom_{X_2}(\mathcal{A}_{\phi_2}, A_2)$$
by setting
$$\tilde \Phi = {\tilde \partial_2}^{-1} \circ \Phi \circ \tilde \partial_1,$$
where $\tilde \partial_1 : \Hom_{X_1}(\mathcal{A}_{\phi_1}, A_1) \to \Der_{\phi_1}(Q_1, A_1)~~\textrm{and}~~\tilde \partial_2 : \Hom_{X_2}(\mathcal{A}_{\phi_2}, A_2) \to \Der_{\phi_2}(Q_2, A_2)$ are quandle isomorphisms as defined in Proposition \ref{IsomorphismOfMorphismsDerivations}. It is immediate that $\tilde \Phi$ is also a quandle homomorphism, and if $\sigma$, $\psi$ and $\tau$ are isomorphisms, then so is $\tilde{\Phi}$.
\end{proof}
\medskip

\begin{corollary}\label{main-cor1}
Let $Q$ be a quandle, $A_1,A_2$ two abelian $Q$-quandles and $\tau : A_1 \to A_2$ a $Q$-homomorphism. Then there exist quandle homomorphisms $$\Phi: \Der_{\id}(Q, A_1) \to \Der_{\id}(Q, A_2),$$ 
$$ \tilde \Phi: \Hom_Q(\mathcal{A}_{Q}, A_1) \to \Hom_Q(\mathcal{A}_{Q}, A_2).$$
Further, if $\tau$ is an isomorphism, then so are $\Phi$ and $\tilde \Phi$.
\end{corollary}
\medskip

The proof of the following result follows along similar lines as that of Theorem \ref{MorphismBetweenSetOfDerivationsOverDifferentQuandles}.

\begin{proposition}\label{main-cor2} 
Let $\sigma: Q_2 \to Q_1$ be a quandle homomorphism and $A$ an abelian $Q_1$-quandle. If $A$ is viewed as a $Q_2$-quandle via $\sigma$, then there exist  quandle homomorphisms $$\Phi : \Der_{\id}(Q_1, A) \to \Der_{\sigma}(Q_2, A),$$
$$ \tilde \Phi: \Hom_{Q_1}(\mathcal{A}_{Q_1}, A) \to \Hom_{Q_2}(\mathcal{A}_{Q_2}, A).$$
Moreover, if $\sigma$ is an isomorphism, then so are $\Phi$ and $\tilde \Phi$.
\end{proposition}
\medskip

\section{Virtual derived quandles}\label{section-virtual}

Recall from \cite{Manturov} that a \textit{virtual quandle}, denoted $(Q,\alpha)$, is a quandle $Q$ together with an automorphism $\alpha \in \Aut(Q)$. If $\alpha = \id_Q$, then we obtain the usual quandle. A map $f : (Q,\alpha) \to (Q',\alpha ')$ is called a \textit{virtual quandle homomorphism} if $f(q_1 \ast q_2)= f(q_1) \ast f(q_2)$, for all $q_1 , q_2 \in Q$, and $$\alpha ' \circ f=f \circ \alpha.$$
We denote the set of automorphisms of the virtual quandle $(Q,\alpha)$ by $\Aut(Q, \alpha)$. Henceforth, for convenience, we shall drop the word virtual. 

\begin{definition}
The {\it associated group} of a quandle $(Q,\alpha)$ is defined as the pair $(\As(Q), \alpha)$, where $\alpha$ is the induced automorphism of $\As(Q)$ given by $\alpha(e_q) \mapsto e_{\alpha(q)}$.
\end{definition}

\begin{proposition}\label{UniversalPropertyOfVirtualQuandle}
Let $(Q, \alpha)$ be a quandle, $G$ a group and $\beta \in \Aut(G)$. If $\varphi : (Q,{\alpha}) \to \big(\Conj(G), \beta\big)$ is a quandle homomorphism, then there exists a unique group homomorphism $\lambda: \As(Q) \to G$ such that the following diagrams commute

$$
\begin{tikzcd}
                                                              &  & {\big(\Conj(\As(Q)), \alpha\big)} \arrow[dd, "\lambda", dashed] &  & \As(Q) \arrow[dd, "\alpha"'] \arrow[rr, "\lambda"] &  & G \arrow[dd, "\beta"] \\
{(Q,\alpha)} \arrow[rru, "e"] \arrow[rrd, "\varphi"'] &  &                                                         &  &                                                    &  &                       \\
                                                              &  & {\big(\Conj(G),\beta\big)},                                      &  & \As(Q) \arrow[rr, "\lambda"']                      &  & G.                    
\end{tikzcd}
$$
\end{proposition}

\begin{proof}
Define $\lambda: \As(Q) \to G$ on generators of $\As(Q)$ by $\lambda(e_q) = \varphi(q)$ for all $q \in Q$. It is easy to check that $\lambda$ is a group homomorphism and $\lambda \circ e = \varphi$. Further, for any $e_q \in \As(Q)$, we have 
\begin{align*}
\beta \circ \lambda(e_q) &= \beta (\varphi(q))\\
&= \varphi \circ \alpha(q)\\
&= \lambda (e_{\alpha(q))}\\
&= \lambda \circ \alpha(e_q).
\end{align*}
Uniqueness of the map $\lambda$ is clear.
\end{proof}

We say that a quandle $(Q, \alpha)$ acts on a quandle $(A, \gamma)$ if there exists a group homomorphism $\varphi: \As(Q) \to \Aut(A,\gamma)$ such that the following diagram commutes
$$
\begin{tikzcd}
\As(Q) \arrow[dd, "\alpha"'] \arrow[rr, "\varphi"] &  & \Aut(A,\gamma) \arrow[dd, "S_{\gamma}"] \\
                                                   &  &                                 \\
\As(Q) \arrow[rr, "\varphi"']                      &  & \Aut(A,\gamma).                      
\end{tikzcd}
$$
Equivalently, by Proposition \ref{UniversalPropertyOfVirtualQuandle}, $(Q, \alpha)$ acts on $(A, \gamma)$ if there is a quandle homomorphism $$\varphi : (Q,\alpha) \to \big( \Conj(\Aut(A,\gamma)), S_{\gamma}\big),$$ where $S_\gamma$ is the inner automorphism of the quandle $\Conj(\Aut(A,\gamma))$.
A quandle $(A,\gamma)$ is said to be a $(Q,\alpha)$-quandle if there is an action of the quandle $(Q,\alpha)$ on $(A,\gamma)$. We note that a quandle homomorphism $\phi : (Q, \alpha) \to (Q', \alpha')$ induces a group homomorphism $\phi: \As(Q) \to \As(Q')$ such that
\begin{equation}\label{equation}
\alpha' \circ \phi = \phi \circ \alpha.
\end{equation}
It is easy to check that every $(Q',\alpha')$-quandle is also a $(Q,\alpha)$-quandle by commutativity of the following diagram
$$
\begin{tikzcd}
\As(Q) \arrow[r, "\phi"] \arrow[d, "\alpha"] & \As(Q') \arrow[r, "\varphi"] \arrow[d, "\alpha'"] & \Aut(A,\gamma) \arrow[d, "S_\gamma"] \\
\As(Q) \arrow[r, "\phi"']                    & \As(Q') \arrow[r, "\varphi"']                     & \Aut(A,\gamma).                      
\end{tikzcd}
$$
A quandle homomorphism $\phi : (Q_1, \alpha_1) \to (Q_2, \alpha_2)$, where $(Q_1, \alpha_1)$ and $(Q_2, \alpha_2)$ are $(Q, \alpha)$-quandles, is said to be $(Q, \alpha)$-homomorphism if $$\phi(g \cdot q) = g \cdot \phi(q),$$ for all $q \in Q_1$ and $g \in As(Q)$.

\begin{definition}
Let $(A,\gamma)$ be a $(Q',\alpha')$-quandle and $\phi : (Q, \alpha) \to (Q', \alpha')$ be a quandle homomorphism. A map $\partial: (Q,\alpha) \to (A,\gamma)$ is called a (\textit{virtual}) \textit{derivation} if for all $q_1, q_2$ in $Q$,
\begin{equation}\label{ConditionForDerivationOfVirtualQuandle}
\partial(q_1 \ast q_2)= \partial(q_1) \ast \big (\phi(e_{q_1})\cdot \partial(q_2) \big) ~\text{and}
\end{equation}
$$\gamma \circ \partial = \partial \circ \alpha.$$
\end{definition}
 
\begin{definition} A \textit{(virtual) derived quandle} of the quandle homomorphism $\phi : (Q, \alpha) \to (Q', \alpha')$ consists of a $(Q',\alpha')$-quandle $(\mathcal{A}_\phi, \Gamma)$ and a derivation $\partial: (Q,\alpha) \to (\mathcal{A}_\phi, \Gamma)$ such that for any $(Q',\alpha')$-quandle $(A, \gamma)$ and a derivation $\partial': (Q,\alpha) \to (A,\gamma)$, there exists a unique $(Q',\alpha')$-homomorphism $\lambda : (\mathcal{A}_\phi,\Gamma) \to (A,\gamma)$ making the following diagram commute

\begin{equation}\label{DiagramOfUniversalPropertyOfVirtualQuandles}
\begin{tikzcd}
{(Q,\alpha)} \arrow[rr, "\partial"] \arrow[rrdd, "\partial'"'] &  & {(\mathcal{A}_\phi,\Gamma)} \arrow[dd, "\lambda", dashed] \\
                                                               &  &                                                 \\
                                                               &  & {(A,\gamma).}                                  
\end{tikzcd}
\end{equation}
\end{definition}

For our convenience, we denote the derived quandle of the quandle homomorphism $\phi$ by $(\mathcal{A}_\phi, \Gamma, \partial)$.\\ 

\begin{theorem}\label{virtual-derived-existence-uniqueness}
The virtual derived quandle of a virtual quandle homomorphism exists and is unique.
\end{theorem}
\begin{proof}

Let $\mathcal{A}_\phi$ be as defined in Section \ref{UniversalPropertyOfDerivedQuandles}. Define a map $\Gamma : \mathcal{A}_\phi \to \mathcal{A}_\phi$ on generators by
$$\Gamma \big((g,q) \big) = \big(\alpha'(g),\alpha(q) \big),$$ 
where $q \in Q$ and $g \in \As(Q')$. We first check that $\Gamma$ is indeed an automorphism of $\mathcal{A}_\phi$ for which it suffices to show that $$\Gamma \big((g, q \ast q') \big) = \Gamma \big((g, q) \big) \ast \Gamma \big((g \phi(e_q),q') \big)$$ for any $q \in Q$, $q' \in Q'$ and $g \in \As(Q')$.
Considering the left hand side, we get 
\begin{align*}
\Gamma \big((g, q \ast q') \big) &= \big(\alpha'(g), \alpha(q \ast q') \big)\\
&= \big(\alpha'(g), \alpha(q) \ast \alpha(q') \big)\\
&= \big(\alpha'(g), \alpha(q) \big) \ast \big(\alpha'(g)\phi(e_{\alpha(q)}), \alpha(q') \big)\\
&= \Gamma \big((g,q) \big) \ast \big(\alpha'(g)\phi(\alpha(e_q)),\alpha(q') \big)\\
&=\Gamma \big((g,q) \big) \ast \big(\alpha'(g)\alpha'(\phi(e_q)), \alpha(q') \big), ~\textrm{since}~\phi~\textrm{is a quandle homomorphism}\\ 
&=\Gamma \big((g, q) \big) \ast \Gamma \big((g \phi(e_q),q') \big).
\end{align*}
\par
Next, we define a group homomorphism $\varphi : \As(Q') \to \Aut(\mathcal{A}_\phi,\Gamma)$ by setting
$$\varphi(g') \big((g,q) \big) = (g'g, q),$$
for $(g,q) \in \As(Q') \times Q$ and $g' \in \As(Q')$, for which
\begin{align*}
(\Gamma \circ \varphi(g')) \big((g,q) \big) &= \Gamma \big( (g'g,q) \big)\\
&= \big(\alpha'(g'g),\alpha(q)\big)\\
&= \big(\alpha'(g')\alpha'(g),\alpha(q)\big)\\
&= \varphi \big(\alpha'(g') \big) \big((\alpha'(g),\alpha(q)) \big)\\
&= (\varphi\big(\alpha'(g')\big) \circ \Gamma)\big((g,q) \big).
\end{align*}
Therefore, $(\mathcal{A}_\phi, \Gamma)$ is a $(Q', \alpha')$-quandle. It is easy to check that the map $\partial : (Q,\alpha) \to (\mathcal{A}_\phi, \Gamma)$ defined by $$\partial(q) = (1,q)$$ satisfies \eqref{ConditionForDerivationOfVirtualQuandle} and $\Gamma \circ \partial = \partial \circ \alpha$, and hence is a derivation.
\par 
Let us now consider any $(Q', \alpha')$-quandle $(A, \gamma)$ and a derivation $\partial' : (Q, \alpha) \to (A, \gamma)$. We recall the map $\lambda : \mathcal{A}_\phi \to A$ (from Section \ref{UniversalPropertyOfDerivedQuandles}) given by
$$\lambda \big((g,q) \big)= g \cdot \partial'(q).$$
Then, for any $(g,q) \in \mathcal{A}_{\phi}$, we have
\begin{align*}
(\gamma \circ \lambda)\big( (g,q) \big) &= \gamma \big( g \cdot \partial'(q) \big)\\
&=\alpha'(g) \cdot \gamma \big(\partial'(q) \big),~\textrm{since}~A ~\textrm{is a}~(Q',\alpha')\textrm{-quandle}\\
&=\alpha'(g) \cdot \partial' \big(\alpha (q) \big), ~\textrm{since}~\partial'~\textrm{is a derivation}  \\
&=\lambda \big((\alpha' (g), \alpha(q))\big)\\
&=\lambda \circ \Gamma \big((g,q) \big),
\end{align*}
which shows that $\lambda : (\mathcal{A}_\phi, \Gamma) \to (A, \gamma)$ is a $(Q', \alpha')$-homomorphism.  It follows that $\lambda$ is the unique $(Q',\alpha')$-homomorphism for which the diagram \eqref{DiagramOfUniversalPropertyOfVirtualQuandles} commutes. Hence, $(\mathcal{A}_\phi, \Gamma, \partial)$ always exists, and is unique upto isomorphism.
\end{proof}

\begin{theorem}
 Let $(Q, \alpha)$ be a finitely generated quandle and $\phi : (Q, \alpha) \to (Q', \alpha')$ a quandle homomorphism. Then $(\mathcal{A}_\phi, \Gamma)$ is a finitely generated $(Q', \alpha')$-quandle.
\end{theorem}

\begin{proof}
The proof follows along  similar lines as that of Theorem \ref{MainTheoremOfDerivedQuandles} and Corollary \ref{MainCorollaryOfDerivedQuandles}.
\end{proof}

\begin{corollary}
Let $(Q, \alpha)$ be a finitely generated quandle, $\phi : (Q, \alpha) \to (Q', \alpha')$ a quandle homomorphism and $(A, \gamma)$ a finite quandle which is also a $(Q',\alpha')$-quandle. Then there exists only finitely many $(Q', \alpha')$-homomorphisms from $(\mathcal{A}_\phi, \Gamma)$ to $(A, \gamma)$. Further, the number of derivations from $(Q, \alpha)$ to $(A, \gamma)$ is finite.
\end{corollary}
\medskip

\begin{ack}
Neha Nanda and Manpreet Singh would like to thank IISER Mohali for the PhD Research Fellowship. Mahender Singh would like to acknowledge support from SERB MATRICS Grant MTR/2017/000018.
\end{ack}
\medskip


\begin{thebibliography}{HD}
\bibitem{BDS} V. G. Bardakov, P. Dey and M. Singh, \textit{Automorphism groups of quandles arising from groups},  Monatsh. Math. 184 (2017), 519--530.
\bibitem{BarTimSin} V. G. Bardakov, T. Nasybullov and M. Singh, \textit{Automorphism groups of quandles and related groups},  Monatsh. Math. 189 (2019), no. 1, 1--21.
\bibitem{Carter} J. Carter, \textit{A survey of quandle ideas}, Introductory lectures on knot theory, 22--53, Ser. Knots Everything, 46, World Sci. Publ., Hackensack, NJ, 2012.
\bibitem{Carter2} J. Scott Carter, Daniel Jelsovsky, Seiichi Kamada, Laurel Langford and Masahico Saito, \textit{Quandle cohomology and state-sum invariants of knotted curves and surfaces}, Trans. Amer. Math. Soc. 355 (2003), no. 10, 3947--3989.
\bibitem{Crans-Nelson} A. S. Crans and S. Nelson, \textit{Hom quandles}, J. Knot Theory Ramifications 23 (2014),1450010, 18 pp.
\bibitem{Crowell} R. H.  Crowell, \textit{The derived module of a homomorphism}, Adv. Math. 6 (1971), 210--238.
\bibitem{Elhamdadi2012} M. Elhamdadi, J. Macquarrie and R.  Restrepo, \textit{Automorphism groups of quandles}, J. Algebra Appl. 11 (2012), 1250008, 9 pp.
\bibitem{Hou} Xiang-dong Hou,  \textit{Automorphism groups of Alexander quandles}, J. Algebra 344 (2011), 373--385.
\bibitem{Ishihara} Y. Ishihara and H. Tamaru, \textit{Flat connected finite quandles}, Proc. Amer. Math. Soc. 144 (2016), 4959--4971.
\bibitem{JPSZ}  P. Jedli\v{c}ka, A. Pilitowska, D. Stanovsk\'{y} and A. Zamojska-Dzienio, \textit{The structure of medial quandles}, J. Algebra 443 (2015), 300--334. 
\bibitem{Joyce} D. Joyce, \textit{A classifying invariant of knots, the knot quandle}, J. Pure Appl. Algebra 23 (1982), 37--65.
\bibitem{Kamada} S. Kamada, \textit{Knot invariants derived from quandles and racks}, Invariants of knots and 3-manifolds (Kyoto, 2001), 103--117, Geom. Topol. Monogr., 4, Geom. Topol. Publ., Coventry, 2002.
\bibitem{Loos1} O. Loos, \textit{Reflexion spaces and homogeneous symmetric spaces}, Bull. Amer. Math. Soc. 73 (1967), 250--253.  
\bibitem{Loos2}  O. Loos, \textit{Spiegelungsr\"{a}ume und homogene symmetrische R\"{a}ume}, Math. Z. 99 (1967), 141--170. 
\bibitem{Manturov} V. O. Manturov and D. P. Ilyutko, \textit{Virtual knots. The state of the art}, Translated from the 2010 Russian original. With a preface by Louis H. Kauffman. Series on Knots and Everything, 51. World Scientific Publishing Co. Pte. Ltd., Hackensack, NJ, 2013. xxvi+521 pp.
\bibitem{Matveev} S. Matveev, \textit{Distributive groupoids in knot theory}, (Russian) Mat. Sb. (N.S.) 119 (161), 78--88, 160 (1982).
\bibitem{Nelson} S. Nelson, \textit{The combinatorial revolution in knot theory}, Notices Amer. Math. Soc. 58 (2011), 1553--1561. 
\bibitem{Rubinsztein} R. Rubinsztein, \textit{Topological quandles and invariants of links},  J. Knot Theory Ramifications  16 (2007), 789--808.
\bibitem{Singh} M. Singh, \textit{Classification of flat connected quandles},  J. Knot Theory Ramifications 25 (2016), 1650071, 8 pp.
\bibitem{Szymik2019} M. Szymik, \textit{Quandle homology is Quillen homology},  Trans. Amer. Math. Soc. 371 (2019), no. 8, 5823--5839.
\end{thebibliography}
\end{document}